\documentclass{amsart}
\usepackage{amsmath,amssymb,mathtools,amsthm}
\usepackage[utf8]{inputenc}
\usepackage{xcolor}
\usepackage{appendix}

\DeclareMathOperator{\rk}{rank}

\newcommand{\Z}{\mathbb{Z}}	
\newcommand{\R}{\mathbb{R}}	
\newcommand{\prob}{\mathbb{P}}	
\usepackage[english]{babel}
\usepackage{textcomp}
\usepackage{mathtools}
\usepackage{url}
\usepackage{todonotes}
\usepackage[T1]{fontenc}
\usepackage{pgfplots}
\pgfplotsset{compat=1.15}
\usepackage{mathrsfs}
\usetikzlibrary{arrows}
\usepackage{empheq}

\newcommand{\ds}{\displaystyle}
\newtheorem{theorem}{Theorem}[section]

\newtheorem{lemma}[theorem]{Lemma}

\newtheorem{prop}[theorem]{Proposition}

\newtheorem{conjecture}[theorem]{Conjecture}
\newtheorem*{thm:length2}{Theorem \ref{thm:length2}}
\newtheorem*{thm:length3}{Theorem \ref{thm:length3}}
\newtheorem*{thm:equalodd}{Theorem \ref{thm:equalodd}}
\newtheorem*{thm:equaleven}{Theorem \ref{thm:equaleven}}
\theoremstyle{definition}
\theoremstyle{remark}
\title[Random chain complexes of real vector spaces]{Random chain complexes of real vector spaces}
\author{Ayat Ababneh}
\author{Matthew Kahle}
\date{\today}

\begin{document}

\maketitle

\begin{abstract}
We introduce a natural class of models of random chain complexes of real vector spaces that some classical ensembles of random matrices, the length $1$ case. We are interested here in the homological properties of these random complexes. For chain complexes of length $1$ or $2$, we characterize the Betti numbers almost surely, in terms of the dimensions of the vector spaces. We further examine complexes of length $3$ with some constraints on dimensions, as well as complexes of arbitrary finite length in which all vector spaces have equal dimension. Across all these settings, we show that the sum of the Betti numbers is almost surely as small as possible, attaining a trivial lower bound $|\chi|$ dictated by the dimensions of the underlying vector spaces and the Euler formula. These results suggest an underlying algebraic heuristic for a phenomenon frequently observed in stochastic topology, that nontrivial homology rarely appears unless forced to.
\end{abstract}
\section{Introduction}

Stochastic topology has been an active area of research for the past 15--20 years. Various models have been studied, including random manifolds \cite{DunThurston}, random simplicial complexes \cite{BK18, Kahlesurvey18}, random groups \cite{Ollivier05}, random knots \cite{EZ17,EZHLN18}, etc. Algebraic topological invariants of such random topological objects have been studied extensively, and qualitatively similar phenomena have been observed in disparate settings.

Dunfield and W.\ Thurston \cite{DunThurston} introduced a natural model of random $3$-manifold $M$, and showed that with high probability $H_1(M, \R) =0$. Similar homology-vanishing phenomena have been observed in random simplicial complexes, over $\R$ and over finite fields. In these settings, there is often a trivial combinatorial obstruction to cohomology vanishing. Several theorems in the area show that as soon as some combinatorial obstruction disappears, cohomology vanishes \cite{Bobrowski22,Kahle14, KPR21,KP16, LM2006,MW2009,NP23}. It is also sometimes observed that in a certain parameter regime the number of $k$-faces is much more than the number of $k-1$-faces or $k+1$-faces. In such cases, $\beta_k$ is forced to be nonzero for dimensional reasons \cite{CF17,Fowler19,Kahle2009}.

We study here homology at the chain complex level. Random chain complexes of vector spaces over finite fields were studied earlier by Ginzburg and Pasechnik \cite{GP17} and by Catanzaro and Zabka \cite{CZ21}. We work over the reals, which has a somewhat different flavor. In the finite-field setting, one naturally considers asymptotics where some parameter, either the order of the finite field or the length of the chain complex, tends to infinity. One might be interested in events that occur ``asymptotically almost surely'', where the probability tends to one. Here we have results ``almost surely'' where the probability equals one. It is also slightly more subtle measure-theoretically to define random chain complexes of real vector spaces, since counting measure is not available. The following section makes this precise.\\

In the following, we consider bounded chain complexes of real vector spaces $(A_*, d_*)$,
\[0\leftarrow A_0\xleftarrow{d_1}A_1 \xleftarrow{d_2} \dots  \xleftarrow{d_n} A_n \leftarrow 0.\]
We assume the dimensions of the vector spaces are given $a_i = \dim A_i$, and the \emph{Euler characteristic} of the chain is then defined by
\[ \chi = \sum_{i=0}^n (-1)^i a_i.\]

Recall that if the ranks of the maps are given by $r_i = \rk d_i$, then we can define the \emph{Betti numbers} by
\[ \beta_i = - r_i + a_i  - r_{i+1}.\]
Equivalently, $\beta_i = \dim H_i(A_*)$.  The well-known Euler formula states that
\[ \chi = \sum_{i=0}^n (-1)^i \beta_i.\]
Taking absolute values of both sides and applying the triangle inequality, we have that
\[ |\chi| \le \sum_{i=0}^n \beta_i.\]
If equality occurs, we say total homology of the chain complex is as small as possible. \\

The precise definition for random chain complexes of real vector spaces will be given in the next section. We state our main results first.

\begin{thm:length2}
For a random chain complex of length $2$, 
\[0\leftarrow A_0\xleftarrow{d_1}A_1\xleftarrow{d_2}A_2\leftarrow 0,\]
almost surely 
\[(\beta_0,\beta_1,\beta_2)=\left\{\begin{array}{lll}
(a_0-a_1~, ~0~, ~a_2),&  & \mbox{if } a_0\geq a_1+a_2,\\
(a_0~,~ 0~,~ a_1-a_2),& & \mbox{if } a_2\geq a_0+a_1,\\
(0~,~ a_1-a_0-a_2~,~ 0), &&  \mbox{if }a_1\geq a_0+a_2,\\
(\chi/2~,~ 0 ~,~ \chi/2), && \mbox{if }a_2-a_1\le a_0 \le a_1+a_2, \,  \\
&& a_1 \le a_0+a_2,  \mbox{ and } \chi \mbox{ is even,}\\
\left((\chi\pm 1 )/2~,~ 0 ~,~ (\chi \mp 1)/2\right), && \mbox{if }a_2-a_1\le a_0 \le a_1+a_2,\\
&& a_1 \le a_0+a_2, \mbox{ and } \chi \mbox{ is odd.}\\
\end{array}
\right.
\]
It follows that almost surely
\[\sum_{i=0}^2 \beta_i = |\chi|.\]
\end{thm:length2}

\vspace{0.5in}

For a random chain complex of length $2$, total homology is almost surely as small as possible. We cannot expect this to hold for a random chain complex of length $3$ without further assumptions. For example, if $(a_0,a_1, a_2, a_3) = (2,1,1,2)$, then for every chain complex we have nontrivial homology $\beta_0 \ge 1$ and $\beta_3 \ge 1$, but $|\chi| = 0$. The problem with this example is that we are forcing homology to appear by the dimensions of the vector spaces. So a reasonable extra assumption is that $a_i+a_{i+2}\geq a_{i+1}$ for every $i$, since otherwise $\beta_{i+1}$ is forced to be positive. With those extra assumptions, we show that total homology is indeed almost surely as small as possible. 

\begin{thm:length3}
For a random chain complex of length $3$, 
\[0\leftarrow A_0\xleftarrow{d_1} A_1\xleftarrow{d_2} A_2\xleftarrow{d_3} A_3\leftarrow 0,\]
where $a_i+a_{i+2}\geq a_{i+1}$ for $i=-1,0,1,2$, almost surely $\sum \beta_i=|\chi|$.
\end{thm:length3}

Next, we study random bounded chain complexes of real vector spaces of arbitrary length, where the vector spaces all have equal dimension. First, we consider the case of $n$ odd. In this case, the random chain complex is almost surely an exact sequence.  

\begin{thm:equalodd}
Let $n$ be odd, and let $m$ be a positive integer. For a random chain complex
$0\leftarrow A_0 \xleftarrow{d_1} A_1\xleftarrow{d_2} \cdots\xleftarrow{d_n} A_n\leftarrow 0$ where $a_i=m$ for every $i$, almost surely $\beta_i=0$ for $i=0,1, \dots,n.$ \\
\end{thm:equalodd}

Finally, we consider the case of equal dimensions and $n$ even.

\begin{thm:equaleven}
Let $n$ be even, and let $m$ be a positive integer. For a random chain complex
$0\leftarrow A_0 \xleftarrow{d_1} A_1\xleftarrow{d_2} \cdots\xleftarrow{d_n} A_n\leftarrow 0$ where $a_i=m$ for every $i$, almost surely $\sum \beta_i=|\chi|$.\\
\end{thm:equaleven}

In the $n$ even case, we show further that all the odd Betti numbers vanish, and that either $\beta_{2k}=\left\lfloor\frac{m}{n/2+1}\right\rfloor$ or $\beta_{2k}=\left\lceil\frac{m}{n/2+1}\right\rceil$ for  $k=0,1,2,\cdots,\frac{n}{2}.$ Note that $|\chi| = m$, so this is saying that the total homology is spread out as evenly as possible. If $m / (n/2 + 1)$ is not an integer, then this can give many possibilities for the vector of Betti numbers $(\beta_0, \dots, \beta_m)$. This is in contrast to many of the examples above, where the vector of Betti numbers is concentrated on a single vector.

\subsection*{Acknowledgements} The work of the first author was partially supported by a research grant from the University of Jordan. The second author thanks Elliot Paquette and Kenny Easwaran for helpful discussions about conditioning on an event of probability zero and Dave Anderson for helpful discussions about varieties of chain complexes.

\section{The model}
In this section we introduce a natural class of models of random chain complexes over $\R$ and compute a few of its basic features.

\subsection{Conditioning on an event of probability zero}

Conditional probability $\prob(B \mid A) $ is usually defined by
\[
\prob(B \mid A) := \frac{ \prob( A \mbox{ and } B)}{\prob(A)}.
\]
However intuitive, this definition is of limited use if $\prob(A) = 0$. A lot has been written about conditioning on an event of probability zero, both mathematically \cite{BD75} and philosophically \cite{Easwaran2019}. 

Let $f : \R^N \to\R$ be a bounded, measurable, continuous probability density function. For example, let $f$ be a standard multivariate normal distribution. Assume that Hausdorff dimension $\dim_H X =k$, and that $X$ and $Y$ are rectifiable subsets of $\R^N$. In the following, $X$ will always be an affine algebraic variety in $\R^n$ and $Y$ a subvariety, so this will certainly hold. 

We define
\[
\prob(Y \mid X)=  \frac{\int_{Y} f(x) d \square^k(x)}{\int_{X} f(x) d \square^k(x)},
\]
where $d\square^k(p)$ is the $k$-dimensional Hausdorff (surface) measure.

The simple but useful fact that we apply throughout the rest of the paper is that if $\dim_H(Y) < \dim_H(X)$, then the numerator is zero and $\prob(Y \mid X)=0$.

In the following, $\R^N$ will be the space of all sequences of linear maps
\[
0\leftarrow A_0\xleftarrow{d_1} A_1\xleftarrow{d_2} \cdots \xleftarrow{d_{n}}  A_n\leftarrow 0,
\]
$X = M(\vec{a}) \subseteq R^N$ will be the variety of chain complexes of vector spaces dimensions $\vec{a}$, and $Y = P(\vec{a},\vec{r})$ the subvariety of $M(\vec{a})$ where the ranks of the maps are given by $\vec{r} = (r_1, \dots, r_n)$.


An equivalent definition is the following. For a subset $A \subseteq \R^N$, let $A(\epsilon)$ denote the $\epsilon$-thickening of $A$. That is, 
\[ A(\epsilon) := \left\{ x \in \R^N \mid d(x ,A) < \epsilon \right\}.\]
Then we can equivalently define 
\[
\prob(Y \mid X)= \lim_{\epsilon \to 0^+}\frac{\int_{Y(\epsilon)} f(x) dx}{\int_{X(\epsilon)} f(x) dx},
\]
where $dx$ represents Lebesgue measure. 

The fact that these definitions are equivalent is standard, and is closely related to the notion of disintegration of measures. We do not actually need $f$ to be continuous --- it is sufficient, for example, for $f$ to be bounded and Borel measurable \cite{Federer69,DHS13}. We could even consider a ground probability measure on $\R^N$ without a density function, as long as it is absolutely continuous with respect to Lebesgue measure.

So in some strong sense, all our main results hold quite generally, simply for dimensional reasons. At the same time, we believe these definitions are quite natural, and in fact this allows one to define random chain complexes of real vector spaces in terms of classical random matrix ensembles. The Gaussian example mentioned above is an important special case, generalizing the well-studied random-matrix model with i.i.d.\ Gaussian entries to longer random complexes. 

\subsection{Random chain complexes}

Given a vector of non-negative integers $\vec{a} = (a_0, \dots, a_n)$, for each $i = 0, 1, \dots, n$, let $A_i = \R^{a_i}$ be a real vector space of dimension $a_i$. We define random chain complexes
\[
0\leftarrow A_0\xleftarrow{d_1} A_1\xleftarrow{d_2} \cdots \xleftarrow{d_{n}}  A_n\leftarrow 0.
\]

Let $M(\vec{a})$ denote the space of all chain complexes. We can think of $M( \vec{a})$ as an affine algebraic variety in $\mathbb{R}^N$, where $N=a_0a_1 + \dots +a_{n-1}a_n$, in the following sense. We identify each boundary map $d_i$ as given by an $a_{i+1} \times a_i$ matrix. The total number of entries of the matrices is $N$. For every $i$, the condition $d_{i+1} d_i  = 0$ is equivalent to the vanishing of a set of degree-two polynomials in the entries of the matrices $d_i$ and $d_{i+1}$. 

Now, given another vector of non-negative integers $\vec{r} =(r_1, r_1, \dots, r_{n})$, let $P(\vec{a}, \vec{r})$ denote the subspace of $M(\vec{a})$ where for every $i=1, \dots, n$ we have $\mbox{rank } d_i = r_i$.

For some choices of $\vec{a}$ and $\vec{r}$, the subspace $P(\vec{a},\vec{r})$ is empty --- i.e.\ there do not exist chain complexes with the given dimensions and ranks. The following proposition establishes necessary and sufficient conditions for existence. This condition must be well known, but we include the statement here for the sake of completeness. 

\begin{prop}\label{prop:existence2}
There exists a chain complex 
\[ 0 \leftarrow A_0 \xleftarrow{d_1} A_1 \xleftarrow{d_2}\cdots\xleftarrow{d_{n}} A_n \leftarrow 0\]
with $\dim A_i = a_i$ for $i = 0, \dots, n$ and $\mbox{rank }d_i=r_i$ for $i=1, \dots, n$, if and only if $r_i + r_{i+1} \le a_i$ for every $i = 0, \dots, n$.
\end{prop}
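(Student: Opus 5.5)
We prove the two directions separately, using the conventions $r_0 = 0$ and $r_{n+1} = 0$, since $d_0$ and $d_{n+1}$ are the zero maps to and from the zero spaces.

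\emph{Necessity.} Suppose a chain complex with the given dimensions and ranks exists. Fix $i$. The condition $d_i d_{i+1} = 0$ says $\operatorname{im} d_{i+1} \subseteq \ker d_i$. Comparing dimensions,
\[ r_{i+1} = \dim \operatorname{im} d_{i+1} \le \dim \ker d_i = a_i - r_i \]
by rank--nullity for $d_i : A_i \to A_{i-1}$. This gives $r_i + r_{i+1} \le a_i$. Equivalently, this is just the statement $\beta_i \ge 0$. At the endpoints $i=0$ and $i=n$ the inequality reduces to $r_1 \le a_0$ and $r_n \le a_n$, which hold trivially for ranks.

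\emph{Sufficiency.} Assume $r_i + r_{i+1} \le a_i$ for all $i$. We build the complex explicitly by choosing bases. For each $i$, decompose
\[ A_i = \R^{a_i} = B_i \oplus C_i \oplus H_i, \]
where $\dim B_i = r_{i+1}$, $\dim C_i = r_i$, and $\dim H_i = a_i - r_i - r_{i+1} \ge 0$. Concretely, $B_i$ is spanned by the first $r_{i+1}$ standard basis vectors, $C_i$ by the next $r_i$, and $H_i$ by the rest. This choice is possible exactly because of the hypothesis.

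Define $d_i : A_i \to A_{i-1}$ to vanish on $B_i \oplus H_i$ and to map $C_i$ isomorphically onto $B_{i-1}$. This makes sense because both spaces have dimension $r_i$; we send the $j$-th basis vector of $C_i$ to the $j$-th basis vector of $B_{i-1}$. Then:
\begin{itemize}
\item $\operatorname{rank} d_i = r_i$.
\item $\operatorname{im} d_{i+1} = B_i$.
\item $d_i$ kills $B_i$, so $d_i d_{i+1} = 0$.
\end{itemize}
Hence this is a chain complex with the required dimensions and ranks. In matrix form, each $d_i$ is a $0/1$ matrix with $r_i$ ones in a staircase position.

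\emph{Remarks.} The only points needing care are the boundary conventions $r_0 = r_{n+1} = 0$ and, in the construction, checking that $C_i$ and $B_i$ are placed so that composites vanish. There is no real obstacle here: the proposition is the standard decomposition of a complex into elementary pieces $0 \to \R \xrightarrow{\cong} \R \to 0$ plus homology.
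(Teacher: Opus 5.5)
Your proof is correct and complete. Necessity is exactly $\beta_i = a_i - r_i - r_{i+1} \ge 0$, obtained from $\operatorname{im} d_{i+1} \subseteq \ker d_i$ and rank--nullity. Sufficiency is the explicit splitting $A_i = B_i \oplus C_i \oplus H_i$, which is a direct sum of elementary complexes $0 \to \R \xrightarrow{\cong} \R \to 0$ plus homology.

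The paper omits this proof as ``straightforward,'' and your argument is the standard one it implicitly relies on. You also correctly make explicit the boundary conventions $r_0 = r_{n+1} = 0$ that the statement leaves implicit.
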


The proof is straightforward so we omit it.

\begin{prop} \label{prop:maindim}
Let $\vec{a}=(a_0, a_1, \dots, a_n)$ and $\vec{r}=(r_1, r_2, \dots, r_n)$ be vectors of nonnegative integers. Whenever a chain complex is possible with vector space dimensions given by $\vec{a}$ and ranks of maps given by $\vec{r}$, the dimension of the subvariety of chain complexes of real vector spaces
\[
0\leftarrow A_0\xleftarrow{d_1}A_1\xleftarrow{d_2}A_2\leftarrow \cdots\xleftarrow{d_{n}}A_{n}\leftarrow 0
\]
with ranks $\vec{r}$ is given by the formula 
\begin{align*}
d(\vec{a},\vec{r})=\sum_{i=1}^{n}r_i(a_{i}+a_{i-1}-r_{i-1}-r_{i}).
\end{align*}
\end{prop}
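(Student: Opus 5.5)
We compute the dimension of $P(\vec a,\vec r)$ by realizing it as a homogeneous space, or more concretely as an iterated fibration in which we choose the maps $d_n, d_{n-1}, \dots, d_1$ one at a time (equivalently $d_1, d_2, \dots$; we use the order $d_1,\dots,d_n$). Throughout, $r_0=r_{n+1}=0$. Proposition~\ref{prop:existence2} guarantees that every fiber considered below is nonempty.

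\textbf{Step 1: inductive fibration.} We build a complex with ranks $\vec r$ step by step. First, $d_1$ ranges over all $a_0\times a_1$ matrices of rank $r_1$. This is a smooth manifold of dimension $r_1(a_0+a_1-r_1)$, which is the $i=1$ term of the formula since $r_0=0$. Next, suppose $d_1,\dots,d_{i-1}$ have been chosen. The constraint on $d_i\colon A_i\to A_{i-1}$ is that $d_{i-1}d_i=0$, i.e.\ $\operatorname{im} d_i\subseteq \ker d_{i-1}$. Since $\ker d_{i-1}$ is a fixed subspace of $A_{i-1}$ of dimension $a_{i-1}-r_{i-1}$, the admissible $d_i$ are exactly the linear maps $A_i\to\ker d_{i-1}\cong\R^{a_{i-1}-r_{i-1}}$ of rank $r_i$. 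These form a manifold of dimension
\[r_i\bigl(a_i+(a_{i-1}-r_{i-1})-r_i\bigr),\]
which is exactly the $i$th summand. The inequality $r_i\le a_{i-1}-r_{i-1}$ from Proposition~\ref{prop:existence2} ensures the fiber is nonempty. The condition at the top end, $r_n\le a_n$, is automatic, and later maps impose no constraint on earlier ones beyond the rank conditions already accounted for.

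\textbf{Step 2: local triviality.} The fiber over $(d_1,\dots,d_{i-1})$ depends only on the subspace $\ker d_{i-1}$. This subspace varies smoothly, as a point of a Grassmannian, over the manifold of rank-$r_{i-1}$ maps. Locally we can choose a smooth frame for $\ker d_{i-1}$, i.e.\ a smoothly varying isomorphism $\ker d_{i-1}\cong \R^{a_{i-1}-r_{i-1}}$. This identifies the space of $(d_1,\dots,d_i)$ locally with a product of the base and the fixed manifold of $(a_{i-1}-r_{i-1})\times a_i$ matrices of rank $r_i$. Hence the projection forgetting $d_i$ is a smooth fiber bundle, and by induction $P(\vec a,\vec r)$ is a smooth manifold (a semialgebraic set) of dimension equal to the sum of the fiber dimensions. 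This sum is $d(\vec a,\vec r)$.

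\textbf{Main obstacle.} The delicate point is Step 2: making the bundle structure rigorous, so that dimensions genuinely add in whatever sense of dimension is used (Hausdorff or algebraic). The approach is to note that rank-$r$ matrices form a smooth manifold. Then write $d_i=\iota_{K}\circ B$, where $\iota_K$ is a locally smooth frame of $K=\ker d_{i-1}$ and $B$ ranges over $(a_{i-1}-r_{i-1})\times a_i$ matrices of rank $r_i$. This map gives local charts that are products. Since $P(\vec a,\vec r)$ is then a smooth embedded submanifold of $\R^N$, its Hausdorff dimension coincides with its manifold dimension, which is what the conditioning framework of the paper requires.
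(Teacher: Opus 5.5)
Your proof is correct. It differs from the paper, which gives no proof of this proposition. The paper cites De Concini--Strickland for the dimension of the variety of complexes with $\operatorname{rank} d_i \le r_i$, and notes that the exact-rank locus $P(\vec a,\vec r)$ is a nonempty open subvariety of it, hence has the same dimension.

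Your fibration $(d_1,\dots,d_i)\mapsto(d_1,\dots,d_{i-1})$ is a self-contained substitute. Its fiber is the set of rank-$r_i$ maps $A_i\to\ker d_{i-1}\cong\R^{a_{i-1}-r_{i-1}}$, and the fiber dimensions $r_i(a_i+a_{i-1}-r_{i-1}-r_i)$ are exactly the summands. Proposition~\ref{prop:existence2} makes every fiber nonempty. Note that $r_i\le a_i$ follows from the hypothesis $r_i+r_{i+1}\le a_i$, so it is a hypothesis being used rather than ``automatic.''

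The one detail worth writing out is why the local trivialization $(x,B)\mapsto(x,\iota_x B)$ is an embedding onto the part of the stage-$i$ space lying over an open set of the base. The reason is that it has the smooth left inverse $(x,D)\mapsto\bigl(x,(\iota_x^{T}\iota_x)^{-1}\iota_x^{T}D\bigr)$.

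Your route establishes exactly what the paper's conditioning framework needs: $P(\vec a,\vec r)$ is a smooth embedded submanifold of $\R^N$, hence rectifiable with Hausdorff dimension $d(\vec a,\vec r)$. The citation route gives an algebraic dimension and leaves implicit the passage to real points and Hausdorff dimension. In exchange, it is shorter and carries extra structure, such as irreducibility of the closure.

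Your opening announces the order $d_n,\dots,d_1$ and then switches to $d_1,\dots,d_n$. Either order works: the top-down order gives $\sum_i r_i(a_i-r_{i+1}+a_{i-1}-r_i)$, which equals the stated sum because $r_0=r_{n+1}=0$.
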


This formula is well known to algebraic geometers --- see, for example, De Concini and Strickland's paper on varieties of complexes \cite{DCS81}. The varieties studied there have the condition $\mbox{rank } d_i \le r_i$ rather than $\mbox{rank } d_i = r_i$, but the latter is an open subvariety of the former so they have the same dimension. 
We will see in the following that for many choices of rank vector $\vec{r}$, $\dim P(\vec{a},\vec{r}) < \dim M(\vec{a})$. By the earlier discussion, $\dim P(\vec{a},\vec{r}) < \dim M(\vec{a})$, then the probability that a random chain complex with vector-space dimensions $\vec{a}$ has rank vector $\vec{r}$ is zero. For a start, we note the following useful lemma.

\begin{lemma}[Monotonocity Lemma]\label{lemma:increasing}
For the random chain complex 
$$0\leftarrow A_0\leftarrow A_1\leftarrow\cdots\leftarrow A_n\leftarrow 0.$$
If $\vec{r}=(r_1,\cdots,r_i+1,\cdots,r_{n})$ is in the allowable region, then
$$d(\vec{a},(r_1,\cdots,r_i+1,\cdots,r_{n}))> d(\vec{a},(r_1,\cdots,r_i,\cdots,r_{n})).$$
That is, the dimension of the space of chain complexes is strictly increasing in each rank.
\end{lemma}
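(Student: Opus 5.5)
Since $d(\vec a,\vec r)$ is given explicitly by Proposition~\ref{prop:maindim}, the plan is to compute the difference directly. Write $\vec r'$ for the vector obtained from $\vec r$ by replacing $r_i$ with $r_i+1$, and use the conventions $r_0 = r_{n+1} = 0$. In the formula
\[
d(\vec a,\vec r)=\sum_{j=1}^{n} r_j\,(a_j+a_{j-1}-r_{j-1}-r_j),
\]
the rank $r_i$ appears only in the $j=i$ and $j=i+1$ terms. All other summands cancel in $d(\vec a,\vec r') - d(\vec a,\vec r)$, so the first step is to isolate these two terms.

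For the $j=i$ term, the difference is
\[
(r_i+1)(a_i+a_{i-1}-r_{i-1}-r_i-1) - r_i(a_i+a_{i-1}-r_{i-1}-r_i) = a_i+a_{i-1}-r_{i-1}-2r_i-1 .
\]
For the $j=i+1$ term (present only when $i<n$), the difference is
\[
r_{i+1}(a_{i+1}+a_i-r_i-1-r_{i+1}) - r_{i+1}(a_{i+1}+a_i-r_i-r_{i+1}) = -r_{i+1}.
\]
Adding these gives
\[
d(\vec a,\vec r')-d(\vec a,\vec r) = (a_{i-1}-r_{i-1}-r_i) + (a_i - r_i - r_{i+1}) - 1 .
\]
This also holds for $i=n$ with $r_{n+1}=0$, and for $i=1$ with $r_0=0$.

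The final step is to show this quantity is at least $1$, using that $\vec r'$ is in the allowable region of Proposition~\ref{prop:existence2}. Applying that condition to $\vec r'$ at index $i-1$ gives $r_{i-1}+r_i+1\le a_{i-1}$, so the first bracket is at least $1$. At index $i$ it gives $r_i+1+r_{i+1}\le a_i$, so the second bracket is also at least $1$. Hence the difference is at least $1+1-1=1>0$.

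The main place for care is the boundary indices: checking that the conventions $r_0=r_{n+1}=0$ make the formula of Proposition~\ref{prop:maindim} and the constraints of Proposition~\ref{prop:existence2} at $i=0$ and $i=n$ match up. The rest is the short computation above.
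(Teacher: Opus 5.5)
Your proposal is correct and matches the paper's proof: both compute $d(\vec a,\vec r')-d(\vec a,\vec r)$ directly, reduce it to $(a_{i-1}-r_{i-1}-r_i)+(a_i-r_i-r_{i+1})-1$, and bound this below by $1$ using the allowable-region constraints for the incremented vector at indices $i-1$ and $i$. Your explicit use of the conventions $r_0=r_{n+1}=0$ for the endpoint cases is a small addition that the paper leaves implicit.
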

\begin{proof}
Let $\vec{\rho_1}=(r_1,\cdots,r_i+1,\cdots,r_{n})$ and $\vec{\rho_2}=(r_1,\cdots,r_i,\cdots,r_{n}).$
We have
$$
\begin{array}{lll}
d(\vec{a},\vec{\rho_1})-d(\vec{a},\vec{\rho_2})&=&(r_{i}+1)(a_{i-1}+a_i-(r_i+1)-r_{i-1})\\
&&+(r_{i+1})(a_i+a_{i+1}-r_{i+1}-(r_i+1))\\
&&-r_{i+1}(a_{i+1}+a_{i}-r_{i}-r_{i+1})-r_i(a_{i}+a_{i-1}-r_{i-1}-r_{i})\\
&=& -r_{i+1}+a_i+a_{i-1}-r_{i-1}-(r_i+1)-r_i\\
&=&a_{i-1}-(r_{i-1}+r_i)+a_{i}-(r_i+r_{i+1})-1\\
&\geq& 1.
\end{array}
$$
The last inequality follows from the assumption that the rank vector $\vec{\rho_1}$ is in the allowable region, which means $r_{i-1}+r_i+1\leq a_{i-1}$ and $r_i+1+r_{i+1}\leq a_{i}.$
\end{proof}

\section{Random chain complexes of length $1$, $2$, and $3$}

We now focus on the special cases of lengths 1, 2, and 3.

\subsection{Chain complexes of length 1}

\begin{theorem}\label{prop:2vectors}
For a random chain complex $0\leftarrow A_0\xleftarrow{d_1} A_1\leftarrow 0$, we have
$$
(\beta_0,\beta_1)=\ds\left\{\begin{array}{lll}
(0,a_1-a_0),&& a_0\leq a_1,\\
(a_0-a_1,0), && a_1<a_0.
\end{array}
\right.
$$
\end{theorem}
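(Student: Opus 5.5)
The plan is to reduce the statement to a comparison of dimensions, using Proposition~\ref{prop:maindim} together with the Monotonicity Lemma~\ref{lemma:increasing}. For length $1$ there is no composition condition, since $d_1$ is a single $a_0\times a_1$ matrix. Hence $M(\vec a)=\R^{a_0a_1}$ is the whole ambient space, of dimension $N=a_0a_1$. The only rank parameter is $r_1=\rk d_1$. By Proposition~\ref{prop:existence2}, applied at $i=0$ and $i=1$ with $r_0=r_2=0$, the allowable values are exactly $0\le r_1\le \min(a_0,a_1)$.

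Next I compute the dimension of each stratum from Proposition~\ref{prop:maindim}, with the convention $r_0=0$:
\[
d(\vec a,(r_1))=r_1(a_0+a_1-r_1).
\]
This is the familiar dimension of the variety of $a_0\times a_1$ matrices of rank $r_1$. At $r_1=\min(a_0,a_1)$ it equals $a_0a_1=N$, so the maximal-rank stratum is open and dense in $M(\vec a)$. By Lemma~\ref{lemma:increasing}, or directly because the expression is increasing on the allowable range, every stratum with $r_1<\min(a_0,a_1)$ has dimension strictly less than $N$.

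The probability model then finishes the argument. Each lower-rank stratum $P(\vec a,(r_1))$ is a subvariety of dimension below $\dim M(\vec a)$, so the fact recorded in Section~2 gives $\prob(P(\vec a,(r_1))\mid M(\vec a))=0$. There are only finitely many such $r_1$, so almost surely $r_1=\min(a_0,a_1)$. Here, since $M(\vec a)$ is all of $\R^N$, this simply says that the determinantal locus $\{\rk d_1<\min(a_0,a_1)\}$, cut out by the vanishing of all maximal minors, has Lebesgue measure zero.

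Finally I substitute into $\beta_0=a_0-r_1$ and $\beta_1=a_1-r_1$. If $a_0\le a_1$, this gives $(0,a_1-a_0)$. If $a_1<a_0$, it gives $(a_0-a_1,0)$. None of these steps is a real obstacle. The only point needing care is justifying that a proper subvariety carries zero probability, and that is supplied by the conditioning framework already set up in Section~2.
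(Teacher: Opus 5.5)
Your proposal is correct and follows essentially the same route as the paper: compute $d(\vec a,(r_1))=r_1(a_0+a_1-r_1)$ on the allowable range $0\le r_1\le\min\{a_0,a_1\}$, use the Monotonicity Lemma to see that the maximum is attained only at $r_1=\min\{a_0,a_1\}$ (all lower ranks therefore have probability zero), and read off $\beta_0=a_0-r_1$, $\beta_1=a_1-r_1$. Your extra observation that here $M(\vec a)=\R^{a_0a_1}$, so the claim reduces to the rank-deficient determinantal locus having Lebesgue measure zero, is a useful sanity check rather than a different argument.
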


\begin{proof}
As a special case of the above formula, $d(\vec{a},\vec{r})=r_1(a_0+a_1-r)=-r_1^2+(a_0+a_1)r_1$, where $r_1$ is the rank of the map $d_1.$ To determine which rank vectors have positive probability, we maximize $d$ subject to the constraint $0\leq r_1 \leq \min\{a_0,a_1\}.$ By the monotonicity lemma, $d$ is increasing in $r_1$ and has a maximum when $r_1=\min\{a_0,a_1\}.$ Therefore, the Betti numbers are $\beta_0=a_0-\min\{a_0,a_1\}$ and $\beta_1=a_1-\min\{a_0,a_1\},$ as desired.  
\end{proof}

\subsection{Chain complexes of length 2}
The homology of random chain complexes of length $2$ is characterized by the following theorem.


\begin{theorem} \label{thm:length2}
For a random chain complex of length $2$, 
\[0\leftarrow A_0\xleftarrow{d_1}A_1\xleftarrow{d_2}A_2\leftarrow 0,\]
almost surely 
\[(\beta_0,\beta_1,\beta_2)=\left\{\begin{array}{lll}
(a_0-a_1~, ~0~, ~a_2),&  & \mbox{if } a_0\geq a_1+a_2,\\
(a_0~,~ 0~,~ a_1-a_2),& & \mbox{if } a_2\geq a_0+a_1,\\
(0~,~ a_1-a_0-a_2~,~ 0), &&  \mbox{if }a_1\geq a_0+a_2,\\
(\chi/2~,~ 0 ~,~ \chi/2), && \mbox{if }a_2-a_1\le a_0 \le a_1+a_2, \,  \\
&& a_1 \le a_0+a_2,  \mbox{ and } \chi \mbox{ is even,}\\
\left((\chi\pm 1 )/2~,~ 0 ~,~ (\chi \mp 1)/2\right), && \mbox{if }a_2-a_1\le a_0 \le a_1+a_2,\\
&& a_1 \le a_0+a_2, \mbox{ and } \chi \mbox{ is odd.}\\
\end{array}
\right.
\]
It follows that almost surely
\[\sum_{i=0}^2 \beta_i = |\chi|.\]
\end{theorem}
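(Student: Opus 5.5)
The plan is to reduce the statement to maximizing the dimension function of Proposition~\ref{prop:maindim} over the allowable rank vectors. $M(\vec a)$ is the finite disjoint union of the strata $P(\vec a,\vec r)$ over allowable $\vec r$, so $\dim M(\vec a)=\max_{\vec r} d(\vec a,\vec r)$. By the discussion of conditioning on events of probability zero, every stratum of dimension strictly below this maximum has probability zero. Hence almost surely the rank vector $(r_1,r_2)$ is a maximizer of
\[
d(\vec a,(r_1,r_2)) = r_1(a_0+a_1-r_1) + r_2(a_1+a_2-r_1-r_2)
\]
over the region given by Proposition~\ref{prop:existence2}:
\[
r_1\le a_0,\qquad r_1+r_2\le a_1,\qquad r_2\le a_2 .
\]
The Betti numbers are then read off as $\beta_0=a_0-r_1$, $\beta_1=a_1-r_1-r_2$ and $\beta_2=a_2-r_2$.

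The Monotonicity Lemma~\ref{lemma:increasing} says any maximizer is a maximal point of the region: neither coordinate can be increased by one while staying allowable. First I treat the case $a_1\ge a_0+a_2$. Then $(a_0,a_2)$ is allowable and dominates every other point coordinatewise, so it is the unique maximizer. This gives $(0,a_1-a_0-a_2,0)$.

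Next suppose $a_1\le a_0+a_2$. Take a maximal point with $r_1+r_2<a_1$. It must have $r_1=a_0$ and $r_2=a_2$, which forces $a_0+a_2<a_1$, a contradiction. So every maximizer lies on the segment $r_1+r_2=a_1$, where $\beta_1=0$. Parametrize it by $r_2=t$, $r_1=a_1-t$, with
\[
\max(0,a_1-a_0)\le t\le \min(a1,a2)
\]
(that is, $t$ between $\max(0,a_1-a_0)$ and $\min(a_1,a_2)$). A short computation gives
\[
d=a_0a_1+t(a_1+a_2-a_0)-t^2 .
\]
This is a concave parabola in $t$ with vertex $t^*=(a_1+a_2-a_0)/2$. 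At $t=t^*$ one gets $\beta_2=a_2-t^*=\chi/2$ and $\beta_0=a_0-a_1+t^*=\chi/2$.

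The remaining cases come from clamping this vertex to the interval. If $t^*\le \max(0,a_1-a_0)$, the maximizer is the left endpoint. When $a_0\ge a_1+a_2$ this gives $t=0$ and $(a_0-a_1,0,a_2)$. If $t^*\ge\min(a_1,a_2)$, the maximizer is the right endpoint. When $a_2\ge a_0+a_1$ this gives $t=a_1$ and $(a_0,0,a_2-a_1)$; note the statement's $a_1-a_2$ should read $a_2-a_1$ here. Otherwise $t^*$ lies in the interval. I need to check that the endpoint conditions match exactly the inequalities $a_2-a_1\le a_0\le a_1+a_2$, which is elementary bookkeeping.

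Within the interval, if $\chi$ is even then $t^*$ is an integer and is the unique maximizer. If $\chi$ is odd, the two integers $t^*\pm\tfrac12$ give equal values of $d$, both maximal, so both strata have full dimension. This yields $((\chi\pm1)/2,0,(\chi\mp1)/2)$.

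In every case one checks $\beta_0+\beta_1+\beta_2=|\chi|$ directly. The main delicate point is the reduction in the first paragraph: identifying $\dim M(\vec a)$ with the maximum of $d$ over strata, and noting that tied maximal strata (the odd case) each have positive probability. The statement only needs the ``almost surely one of these'' direction, which follows from the strict-dimension argument alone.
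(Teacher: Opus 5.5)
Your proposal is correct and follows essentially the same route as the paper. You use the Monotonicity Lemma to force the maximizer to $(a_0,a_2)$ when $a_1\ge a_0+a_2$, and onto the line $r_1+r_2=a_1$ otherwise. You then maximize the resulting one-variable concave quadratic (parametrized by $r_2$ rather than $r_1$), clamp its vertex to the feasible interval, and let the parity of $\chi$ decide between one and two maximizers. You are also right that the second case should read $(a_0,0,a_2-a_1)$, as the paper's own remark on the boundary case $a_2=a_0+a_1$ confirms.
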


\medskip

In the final case, there are two distinct choices of rank vectors that maximize the dimension $d$, and each occurs with positive probability. For example, when $a_0=a_1=a_2=m$ where $m$ is odd, the Betti number vectors $(\beta_0, \beta_1, \beta_2)$ are
\[
\bigl((m-1)/2,\,0,\,(m+1)/2\bigr)
\quad\text{and}\quad
\bigl((m+1)/2,\,0,\,(m-1)/2\bigr).
\]

Boundary cases where equality holds in the defining inequalities are compatible with the stated formulas. Indeed, if $a_0=a_1+a_2$, then
$a_0-a_1=(a_0-a_1+a_2)/2=a_2$; if $a_2=a_0+a_1$, then
$a_2-a_1=(a_0-a_1+a_2)/2=a_0$; and if $a_1=a_0+a_2$, then
$a_1-a_0-a_2=(a_0-a_1+a_2)/2=0$.

Theorem \ref{thm:length2} covers all triples of nonnegative integers $(a_0,a_1,a_2)$. Either one of the inequalities
$a_0\ge a_1+a_2$, $a_1\ge a_0+a_2$, or $a_2\ge a_0+a_1$ holds, or else all three fail, in which case
$a_2-a_1<a_0<a_1+a_2$.

\begin{proof}
We want to find the value of $\vec{r}=(r_1,r_2)$ that maximize the dimension $d(\vec{a},\vec{r})$, subject to the constraints
\begin{align*} \label{eq: constraints}
r_1 & \leq  \min\{a_0,a_1\},\\
r_2 &\leq \min\{a_1,a_2\}, \mbox{ and }\\
   r_1+r_2  & \leq a_1. 
\end{align*}
From Proposition \ref{prop:maindim},
\[ d(\vec{a},\vec{r})=r_1(a_0+a_1)+r_2(a_1+a_3)-r_1^2-r_2^2-r_1r_2.\]

By Lemma~\ref{lemma:increasing}, within the region of feasible rank vectors, $d(\vec{a},\vec{r})$ increases when moving $\vec{r}$ right or upward in the lattice. Hence, the dimension $d$ is maximized at lattice points in the upper-right part of the region. We therefore consider two cases.

\begin{enumerate}
    \item First, suppose that $a_1\geq a_0+a_2$. The feasible region defined by the constraints is the shaded rectangle in Figure~\ref{fig:square region}, whose upper-right corner is the point $P=(a_0,a_2).$ The dimension is therefore maximized at $r_1=a_0, r_2=a_2$, yielding Betti number $\beta_1=a_1-a_0-a_2.$
    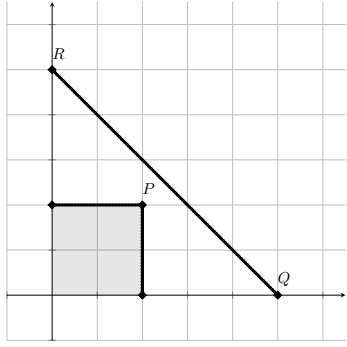
\begin{figure}
    \centering
   \definecolor{qqqqff}{rgb}{0,0,1}
\definecolor{ududff}{rgb}{0.30196078431372547,0.30196078431372547,1}
\definecolor{xdxdff}{rgb}{0.49019607843137253,0.49019607843137253,1}
\tikz \node [scale=0.6, inner sep=0] {
\begin{tikzpicture}[line cap=round,line join=round,>=triangle 45,x=1cm,y=1cm]
\begin{axis}[
x=1cm,y=1cm,
axis lines=middle,
ymajorgrids=true,
xmajorgrids=true,
xmin=-1,
xmax=6.5,
ymin=-1,
ymax=6.5,
xticklabels={,,},
yticklabels={,,},]
\clip(-1,-1) rectangle (6.5,6.5);
\fill[line width=2pt,dash pattern=on 1pt off 1pt,color=black,fill=black,fill opacity=0.1] (0,0) -- (2,0) -- (2,2) -- (0,2) -- cycle;
\draw [line width=2pt,dash pattern=on 1pt off 1pt] (0,5)-- (5,0);
\draw [line width=2pt,dash pattern=on 1pt off 1pt] (2,2)-- (2,0);
\draw [line width=2pt,dash pattern=on 1pt off 1pt,color=black] (2,0)-- (2,2);
\draw [line width=2pt,dash pattern=on 1pt off 1pt,color=black] (2,2)-- (0,2);
\begin{scriptsize}
\draw [fill=black] (0,5) ++(-2.5pt,0 pt) -- ++(2.5pt,2.5pt)--++(2.5pt,-2.5pt)--++(-2.5pt,-2.5pt)--++(-2.5pt,2.5pt);
\draw[color=black] (0.14198613176948002,5.358617787660826) node {$R$};
\draw [fill=black] (5,0) ++(-2.5pt,0 pt) -- ++(2.5pt,2.5pt)--++(2.5pt,-2.5pt)--++(-2.5pt,-2.5pt)--++(-2.5pt,2.5pt);
\draw[color=black] (5.1397402582365,0.3608636611938056) node {$Q$};
\draw [fill=black] (2,2) ++(-2.5pt,0 pt) -- ++(2.5pt,2.5pt)--++(2.5pt,-2.5pt)--++(-2.5pt,-2.5pt)--++(-2.5pt,2.5pt);
\draw[color=black] (2.137676346092488,2.3736110568358137) node {$P$};
\draw [fill=black] (2,0) ++(-2.5pt,0 pt) -- ++(2.5pt,2.5pt)--++(2.5pt,-2.5pt)--++(-2.5pt,-2.5pt)--++(-2.5pt,2.5pt);
\draw [fill=black] (0,2) ++(-2.5pt,0 pt) -- ++(2.5pt,2.5pt)--++(2.5pt,-2.5pt)--++(-2.5pt,-2.5pt)--++(-2.5pt,2.5pt);
\end{scriptsize}
\end{axis}
\end{tikzpicture}
};
 \caption{The feasible region for $(r_1,r_2)$ in the case $a_1\geq a_0+a_2$. The diagonal line is the line $r_1+r_2 = a_1$, and the point $P=(a_0,a_2)$.}
    \label{fig:square region}
\end{figure}
    \item Now, suppose that $a_1< a_0+a_2$. In this case, any points $(r_1,r_2)$ that maximize the dimension will be on the line $r_1+r_2=a_1$. See Figure \ref{fig:trapezoid region}. 

         \begin{figure}
    \centering
        \definecolor{ccccff}{rgb}{0.8,0.8,1}
\definecolor{xdxdff}{rgb}{0.49019607843137253,0.49019607843137253,1}
\tikz \node [scale=0.6, inner sep=0] {
\begin{tikzpicture}[line cap=round,line join=round,>=triangle 45,x=1cm,y=1cm]
\begin{axis}[
x=1cm,y=1cm,
axis lines=middle,
ymajorgrids=true,
xmajorgrids=true,
xmin=-1,
xmax=6.5,
ymin=-1,
ymax=6.5,
xticklabels={,,},
yticklabels={,,},]
\clip(-1,-1) rectangle (6.5,6.5);
\fill[line width=2pt,color=black,fill=black,fill opacity=0.1] (0,0) -- (3,0) -- (3,2) -- (0,5) -- cycle;
\draw [line width=2pt,dash pattern=on 1pt off 1pt] (0,5)-- (5,0);
\draw [line width=2pt,color=black] (0,0)-- (3,0);
\draw [line width=2pt,color=black] (3,0)-- (3,2);
\draw [line width=2pt,color=black] (3,2)-- (0,5);
\draw [line width=2pt,color=black] (0,5)-- (0,0);
\begin{scriptsize}
\draw [fill=black] (0,5) ++(-2.5pt,0 pt) -- ++(2.5pt,2.5pt)--++(2.5pt,-2.5pt)--++(-2.5pt,-2.5pt)--++(-2.5pt,2.5pt);
\draw [fill=black] (2,3) ++(-2.5pt,0 pt) -- ++(2.5pt,2.5pt)--++(2.5pt,-2.5pt)--++(-2.5pt,-2.5pt)--++(-2.5pt,2.5pt);
\draw [fill=black] (3,2) ++(-2.5pt,0 pt) -- ++(2.5pt,2.5pt)--++(2.5pt,-2.5pt)--++(-2.5pt,-2.5pt)--++(-2.5pt,2.5pt);
\draw [fill=black] (1,4) ++(-2.5pt,0 pt) -- ++(2.5pt,2.5pt)--++(2.5pt,-2.5pt)--++(-2.5pt,-2.5pt)--++(-2.5pt,2.5pt);  
\end{scriptsize}
\end{axis}
\end{tikzpicture}
};
\caption{The feasible region for $(r_1,r_2)$ in the case $a_1< a_0+a_2$ and $a_0<a_1\leq a_2$. In this case, any points $(r_1,r_2)$ that maximize dimension must be on the line $r_1+r_2=a_1$, by the Monotonicity Lemma.}
    \label{fig:trapezoid region}
\end{figure}
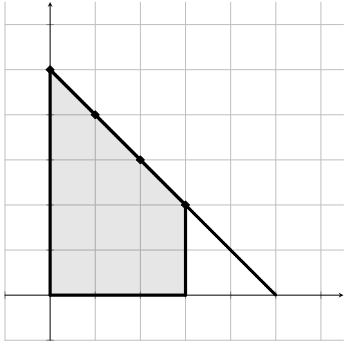

    \end{enumerate}  

To sum up, we proved that $r_1+r_2=\min\{a_1, a_0+a_2\}$ which gives $\beta_1=\max\{0, a_1-a_0-a_2\}$ as desired.

From the above, we know that when $a_1\geq a_0+a_2$, the ranks have values $(r_1,r_2)=(a_0,a_2)$, which give the Betti numbers $\beta_0=0,~\beta_2=0.$ When $a_1<a_0+a_2,$ we saw that the ranks satisfy $r_1+r_2=a_1,$  so
\begin{align*}
d &=r_1(a_0+a_1)+(a_1-r_1)(a_1+a_2)-r_1^2-(a_1-r_1)^2-r_1(a_1-r_1)\\
&=-r_1^2+r_1(a_0+a_1-a_2)+a_1a_2.\\
\end{align*}

The function $d$, now just a function of the single variable $r_1$, is maximized at $r_1=(a_0+a_1-a_2)/2.$ Since $r_2=a_1-r_1$, we have $r_2=(-a_0+a_1+a_2)/2$ and these ranks are in the allowable region as long as $a_2-a_1\leq a_0\leq a_1+a_2.$ So we have two cases:
\begin{enumerate}
    \item\label{item:1,1} If $a_2-a_1\leq a_0\leq a_1+a_2,$ then the rank vector is feasible and we have two subcases.
    \begin{enumerate}
        \item If $a_0-a_1+a_2$ is even, then testing the sign of $d^{'}$ gives that the dimension is maximum when $(r_1,r_2)=\left((a_0+a_1-a_2)/2,(a_1+a_2-a_0)/2\right).$
        
        \item If $a_0-a_1+a_2$ is odd, then the maximum value of the dimension is either when $r_1=(a_0+a_1-a_2-1)/2$ or when $r_1=(a_0+a_1-a_2+1)/2$ as long as these ranks are in the allowable region. Calculating the dimension at these two values of the ranks, we get the same answer, so we have two values of $\vec{r}$ that maximize $d$.
        
    \end{enumerate}
    \item If the inequality $a_2-a_1\leq a_0\leq a_1+a_2$ is not satisfied, that is if $a_0>a_1+a_2$ or if $a_0<a_2-a_1.$ Then the rank that maximizes the dimension will be an endpoint and in all possible orders of the three dimensions the rank will be either $(r_1,r_2)=(a_1,0)$ or $(r_1,r_2)=(0,a_1).$
        \end{enumerate}
\end{proof}

\subsection{Chain complexes of length 3}

Having established a complete characterization of homology for chains of lengths $1$ or $2$, we now study chains of length $3$, with some restrictions on the dimensions of the vector spaces.
\begin{theorem} \label{thm:length3}
For a random chain complex of length $3$, 
\[0\leftarrow A_0\xleftarrow{d_1} A_1\xleftarrow{d_2} A_2\xleftarrow{d_3} A_3\leftarrow 0,\]
where $a_i+a_{i+2}\geq a_{i+1}$ for $i=-1,0,1,2$, almost surely $\sum \beta_i=|\chi|$.
\end{theorem}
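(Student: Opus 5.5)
The plan is to use the dimension-counting principle from Section~2. By Proposition~\ref{prop:maindim} and the discussion after it, almost surely the rank vector $\vec{r}=(r_1,r_2,r_3)$ maximizes
\[
d(\vec a,\vec r)=r_1(a_0+a_1)+r_2(a_1+a_2)+r_3(a_2+a_3)-r_1^2-r_2^2-r_3^2-r_1r_2-r_2r_3
\]
over the feasible region. By Proposition~\ref{prop:existence2}, this region is $r_1\le a_0$, $r_1+r_2\le a_1$, $r_2+r_3\le a_2$, $r_3\le a_3$. So it suffices to show that every feasible maximizer gives $\sum\beta_i=|\chi|$.

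The hypotheses $a_i+a_{i+2}\ge a_{i+1}$ for $i=-1$ and $i=2$ (with $a_{-1}=a_4=0$) say exactly that $a_0\le a_1$ and $a_3\le a_2$. The first step uses the Monotonicity Lemma (Lemma~\ref{lemma:increasing}): at a maximizer, no single rank can be raised by one while staying feasible. Hence $r_1=a_0$ or $r_1+r_2=a_1$; $r_1+r_2=a_1$ or $r_2+r_3=a_2$; and $r_2+r_3=a_2$ or $r_3=a_3$. In terms of Betti numbers this reads
\[
\beta_0\beta_1=0,\qquad \beta_1\beta_2=0,\qquad \beta_2\beta_3=0.
\]

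Next, recall that $\sum\beta_i=|\chi|$ holds exactly when either all odd Betti numbers vanish or all even ones do, since $\chi=\beta_0-\beta_1+\beta_2-\beta_3$. Go through the zero patterns allowed by the three products. Every pattern is fine except one: $\beta_1=\beta_2=0$ with $\beta_0>0$ and $\beta_3>0$. In terms of ranks, that bad case is $r_1<a_0$, $r_3<a_3$, $r_1+r_2=a_1$, $r_2+r_3=a_2$.

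The main step is to rule out this bad case by a move that Lemma~\ref{lemma:increasing} does not cover: replace $\vec r$ by $\vec r'=(r_1+1,r_2-1,r_3+1)$.

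Feasibility of $\vec r'$ is easy to check. We need $r_2\ge1$, and indeed $r_2=a_1-r_1>a_1-a_0\ge0$ by the hypothesis $a_0\le a_1$. Both sums $r_1+r_2$ and $r_2+r_3$ are unchanged. The constraints $r_1+1\le a_0$ and $r_3+1\le a_3$ are exactly what the bad case gives us.

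A direct expansion of the quadratic $d$ should then give
\[
d(\vec a,\vec r')-d(\vec a,\vec r)=(a_0-r_1)+(a_3-r_3)-1\ge 1 .
\]
The linear terms contribute $a_0+a_3$, the squares contribute $-2r_1+2r_2-2r_3-3$, and the cross terms contribute $r_1+r_3-2r_2+2$. So $\vec r$ is not a maximizer, a contradiction.

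I expect this exchange move to be the only real obstacle: one has to find a perturbation that preserves both tight sum constraints, and then check that the increase in $d$ is strict. I would double-check the sign computation carefully.

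Once the bad case is excluded, every maximizing rank vector has alternating vanishing Betti numbers. Every other rank vector has strictly smaller dimension and hence probability zero. Therefore almost surely $\sum\beta_i=|\chi|$.
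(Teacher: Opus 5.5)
Your proof is correct, and it is organized differently from the paper's.

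The paper splits into three cases by the sign of $\chi$ ($a_0+a_2$ greater than, less than, or equal to $a_1+a_3$). In each case it combines single-rank increases with the exchanges $(r_1+1,r_2-1,r_3)$, $(r_1,r_2-1,r_3+1)$ and $(r_1+1,r_2-1,r_3+1)$. This shows that all constraints of one parity are tight, e.g.\ $r_3=a_3$ and $r_1+r_2=a_1$ when $\chi>0$.

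You instead read the Monotonicity Lemma as $\beta_0\beta_1=\beta_1\beta_2=\beta_2\beta_3=0$. You note that $\sum\beta_i=|\chi|$ means exactly that one parity class of Betti numbers vanishes. That reduces everything to the single configuration $\beta_1=\beta_2=0<\beta_0,\beta_3$, which one exchange removes with gain $\beta_0+\beta_3-1\ge 1$. Your expansion is right, and the identity in fact holds for every $\vec r$.

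This gives a shorter proof with no case split, and nothing is lost. Once a parity class vanishes, the sign of $\chi$ tells you which one, and for $\chi=0$ both do, recovering exactness in the paper's Case 3.

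Your combined move, with its explicit check $r_2\ge 1$, is also tidier than the paper's two-step exchange in Cases 1 and 2. There, $(r_1+1,r_2-1,r_3)$ changes $d$ by $(a_0-r_1)-\beta_2-1$, which can be $0$, so it must be followed by raising $r_3$.

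Finally, your route shows how little of the hypothesis is used. Only $a_0\le a_1$ enters, and $a_3\le a_2$ would do equally well, since then $r_2=a_2-r_3>a_2-a_3\ge 0$. So you have actually proved the conclusion whenever $a_0\le a_1$ or $a_3\le a_2$, without the conditions for $i=0,1$. This is sharp: if $a_0>a_1$ and $a_3>a_2$, then $\beta_0,\beta_3>0$ for every complex, so $\sum\beta_i>|\chi|$.
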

\begin{proof}
 Let $\vec{r}=(r_1,r_2,r_3)$ be a rank vector that maximizes the dimension of the space of chains and recall that $\ds\sum_{i=0}^{3}\beta_i=\ds\sum_{i=0}^{3}a_i-2r_i$. To prove the statement of the proposition, we will consider three cases.
  
  {\bf{Case 1}}: $a_0+a_2> a_1+a_3.$
Then $|\chi|=a_0-a_1+a_2-a_3$ and $r_1+r_2+r_3=a_1+a_3$.
If $r_3<a_3$ and $r_2+r_3<a_2$, then $(r_1,r_2,r_3+1)$ is allowable and increases $d$, contradicting maximality. Thus $r_3<a_3$ implies $r_2+r_3=a_2$. If moreover $r_1=a_0$, then $a_0+a_2\le a_1+a_3$, a contradiction; hence $r_1<a_0$, and $(r_1+1,r_2-1,r_3)$ again increases $d$. Therefore $r_3=a_3$. Since $r_1+r_2+r_3=a_1+a_3$, we have $r_1+r_2\le a_1$, with equality forced by the same exchange argument. Thus $r_1+r_2=a_1$.

\smallskip

   {\bf{Case 2}}: $a_1+a_3>a_0+a_2.$
Then $|\chi|=-a_0+a_1-a_2+a_3$ and $r_1+r_2+r_3=a_0+a_2$.
If $r_1<a_0$ and $r_1+r_2<a_1$, then $(r_1+1,r_2,r_3)$ increases $d$.
If $r_1<a_0$ and $r_1+r_2=a_1$, then necessarily $r_3<a_3$, and $(r_1,r_2-1,r_3+1)$ increases $d$. Hence $r_1=a_0$, and by the argument of the previous case, $r_2+r_3=a_2$.

\smallskip

    {\bf{Case 3}}: $a_0+a_2=a_1+a_3.$
Then $|\chi|=0$ and $r_1+r_2+r_3=a_0+a_2$. If $r_1<a_0$, then either $(r_1+1,r_2,r_3)$ or $(r_1+1,r_2-1,r_3+1)$ is allowable and increases $d$, a contradiction; hence $r_1=a_0$. By symmetry, $r_3=a_3$, and therefore
\[
r_2=(a_0+a_2)-(a_0+a_3)=a_2-a_3=a_1-a_0.
\]
\end{proof}


\section{Random complexes with vector spaces of equal dimension}

In this section, we study random chain complexes of arbitrary length where the vector spaces all have equal dimensions. First, we study random chain complexes of odd length. They are almost surely exact sequences.

\begin{theorem} \label{thm:equalodd}
Let $n$ be odd, and let $m$ be a positive integer. For a random chain complex
$0\leftarrow A_0 \xleftarrow{d_1} A_1\xleftarrow{d_2} \cdots\xleftarrow{d_n} A_n\leftarrow 0$ where $a_i=m$ for every $i$, almost surely $\beta_i=0$ for $i=0,1, \dots,n.$ \\
\end{theorem}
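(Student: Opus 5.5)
We follow the same strategy as in the length $1$, $2$ and $3$ cases. Almost surely the rank vector of a random complex maximizes $d(\vec a,\vec r)$ from Proposition~\ref{prop:maindim} over the allowable region of Proposition~\ref{prop:existence2}. Every other allowable rank vector gives a subvariety of strictly smaller dimension, which has conditional probability zero. So it suffices to show two things: the alternating vector
\[
\vec r^{*}=(m,0,m,0,\dots,0,m)
\]
(with $r_i=m$ for $i$ odd and $r_i=0$ for $i$ even; this ends with $r_n=m$ because $n$ is odd) is the unique maximizer of $d$ on the allowable region, and it gives $\beta_i=a_i-r_i-r_{i+1}=0$ for every $i$. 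The second point is immediate, since exactly one of $r_i,r_{i+1}$ equals $m$ and the other equals $0$, with the conventions $r_0=r_{n+1}=0$.

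\textbf{Maximal elements.} By the Monotonicity Lemma~\ref{lemma:increasing}, any maximizer is a maximal element of the allowable region $\{r_i\ge 0,\ r_i+r_{i+1}\le m\}$. So we first describe these. A vector is maximal exactly when, for every $i$, we have $r_{i-1}+r_i=m$ or $r_i+r_{i+1}=m$ (using $r_0=r_{n+1}=0$). In particular $r_1+r_2=m$ and $r_{n-1}+r_n=m$.

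\textbf{Rewriting $d$.} Next we express $d$ in terms of the slacks $s_i=m-r_i-r_{i+1}=\beta_i\ge 0$. Directly,
\[
d=\sum_{i=1}^n r_i(2m-r_{i-1}-r_i).
\]
On the other hand, $d(\vec r^{*})=\frac{n+1}{2}m^2$, which equals the dimension of the open stratum of $M(\vec a)$ consisting of exact complexes. We aim for an identity of the form
\[
d(\vec r^{*})-d(\vec r)=\sum(\text{nonnegative quadratic terms in the }s_i\text{ and }r_i),
\]
which vanishes only at $\vec r^{*}$. Using $r_i=m-r_{i-1}-s_{i-1}$, we write $r_i(2m-r_{i-1}-r_i)=r_i(m-r_{i-1})+r_i(m-r_i)$, telescope the first pieces, and collect the rest into expressions like $s_{i-1}r_i$ and $r_{2k}(m-r_{2k})$. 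A check for $n=3$: $d=r_1(2m-r_1)+r_2(2m-r_1-r_2)+r_3(2m-r_2-r_3)$. On the maximal face $r_1+r_2=r_2+r_3=m$ this becomes $d=2m^2-r_2^2$, which is maximized exactly at $r_2=0$, i.e.\ at $\vec r^{*}$.

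\textbf{Induction instead.} Rather than finding the exact identity, it may be cleaner to induct on odd $n$ and split off the last two maps. Write
\[
d_n(\vec r)=d_{n-2}(r_1,\dots,r_{n-2})+r_{n-1}(2m-r_{n-2}-r_{n-1})+r_n(2m-r_{n-1}-r_n).
\]
For fixed $r_{n-2}$, maximize the last two terms over $r_{n-1}+r_n\le m$ and $r_{n-2}+r_{n-1}\le m$. Taking $r_n=m-r_{n-1}$, the tail becomes $m^2+r_{n-1}(m-r_{n-2}-r_{n-1})$. So the best value of $d_n$ is at most the maximum of $d_{n-2}$ plus $m^2$ plus a correction $r_{n-1}(m-r_{n-2}-r_{n-1})\ge 0$. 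This correction is positive only when $r_{n-2}<m$, i.e.\ only when the truncated vector is not at its own optimum. We therefore need the deficit $d_{n-2}(\vec r^{*})-d_{n-2}(r_1,\dots,r_{n-2})$ to dominate this correction. This forces a strengthened induction hypothesis, for instance
\[
d_{n-2}(\vec r^{*})-d_{n-2}(\vec r)\ge (m-r_{n-2})^2 .
\]
Note that $r_{n-1}(m-r_{n-2}-r_{n-1})\le (m-r_{n-2})^2/4$, so a bound of this shape leaves enough room.

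\textbf{Main obstacle.} The hard step is establishing this strengthened inequality, with strictness, uniformly in $m$. I would first try proving $d(\vec r^{*})-d(\vec r)\ge (m-r_n)^2$ for all allowable $\vec r$ of odd length, with equality only at $\vec r^{*}$. The base case $n=1$ holds since $m^2-r_1(2m-r_1)=(m-r_1)^2$. The inductive step then reduces to a two-variable quadratic inequality in $(r_{n-1},r_n)$ with $r_{n-2}$ as a parameter, which is checked by completing squares. Once uniqueness of the maximizer is established, the dimension argument shows $\vec r=\vec r^{*}$ almost surely, and hence $\beta_i=0$ for every $i$.
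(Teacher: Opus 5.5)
\textbf{There is a genuine gap.} Your reduction is correct: it suffices to show that $\vec r^{*}=(m,0,m,\dots,0,m)$ is the unique maximizer of $d$ on the allowable region, and $\vec r^{*}$ indeed gives $\beta_i=0$. But that uniqueness claim is the whole content of the theorem, and you never prove it. The slack identity is left as an aim. The induction you settle on rests on a false hypothesis.

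Take $n=3$, $m=2$ and the allowable vector $\vec r=(1,1,0)$. Then $d(\vec r)=3+2+0=5$ and $d(\vec r^{*})=8$, so
\[
d(\vec r^{*})-d(\vec r)=3<4=(m-r_3)^2 .
\]
More generally, $\vec r=(m-1,1,m-2)$ has deficit $3$ and $(m-r_3)^2=4$ for every $m\ge 2$. The clause ``equality only at $\vec r^{*}$'' also fails already at $n=1$, where your base-case identity is an equality for every $r_1$. So the two-variable inequality your inductive step would need is false, and completing squares cannot produce it.

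\textbf{The missing idea.} You never need to work on the feasible region or with maximal elements at all, because $d$ is a quadratic polynomial on all of $\R^n$. The paper observes that its Hessian is the constant matrix $-\mathrm{tridiag}(1,2,1)$, which is negative definite, so $d$ is strictly concave. The gradient equations $2m-r_{i-1}-2r_i-r_{i+1}=0$ have the unique solution $\vec r^{*}$. This global maximizer over $\R^n$ happens to be an allowable integer vector, so it is the unique maximizer over the allowable lattice points.

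Equivalently, and this is exactly the identity your slack paragraph was looking for, a Taylor expansion at the critical point gives
\[
d(\vec r^{*})-d(\vec r)=\tfrac12\Bigl((m-r_1)^2+\sum_{i=1}^{n-1}(m-r_i-r_{i+1})^2+(m-r_n)^2\Bigr)=\tfrac12\sum_{i=0}^{n}\beta_i^2 .
\]
In fact $d(\vec a,\vec r)=\tfrac12\sum_i a_i^2-\tfrac12\sum_i\beta_i^2$ holds for arbitrary $\vec a$. The right-hand side vanishes only when every $\beta_i=0$, i.e.\ only at $\vec r^{*}$, which finishes the proof at once. In particular, the correct constant in your strengthened bound is $\tfrac12$, not $1$, and with the identity in hand no induction is needed.
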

\begin{proof}
For a random chain complex of length $n$, where $n$ is odd, all of the same dimension, $m$, the dimension of the space of chain complexes is given by the formula 
$$d(\vec{a},\vec{r})=\sum_{i=1}^{n}r_i(2m-r_{i-1}-r_{i}).$$
Calculating the Hessian for this function we get
$$
 \mathbf {H} _{d}={\begin{bmatrix}-2&-1&0&\ldots &0\\-1&-2&-1&\ddots &\vdots \\0&\ddots &\ddots &\ddots &0\\
 \vdots &\ddots&-1&-2&-1\\0&\ldots &0&-1&-2\end{bmatrix}},
$$
which is a constant negative-definite matrix. Therefore, the function $d$ is convex, which in turn implies that any local maximum for $d$ is a global maximum. Now, to find a local maximum we can solve the following system of equations
$$
\begin{array}{lll}
\ds\frac{\partial d}{\partial r_1}&=&2m-2r_1-r_2=0,\\\\
\ds\frac{\partial d}{\partial r_2}&=&2m-r_1-2r_2-r_3=0,\\
&~\vdots&\\
\ds\frac{\partial d}{\partial r_{n-1}}&=&2m-r_{n-2}-2r_{n-1}-r_n=0,\\\\
\ds\frac{\partial d}{\partial r_{n}}&=&2m-2r_n-r_{n-1}.
\end{array}
$$

Solving this linear system, we obtain a unique solution $\vec{r}=(m,0,m,\cdots,m,0,m)$. This in a global maximum for $d$ as a function $\R^n \to \R$. It is also an integer solution in the feasible region. Therefore $\vec{r}=(m,0,m,\cdots,m,0,m)$ almost surely. \\
\end{proof}

Now we consider even length. The approach above based on convexity does not quite work. We omit the details, but the unique vector $\vec{r}$ that maximizes $d$ is not an integer vector and is not even in the feasible region. \\

\begin{theorem} \label{thm:equaleven}
Let $n$ be even, and let $m$ be a positive integer. For a random chain complex
$0\leftarrow A_0 \xleftarrow{d_1} A_1\xleftarrow{d_2} \cdots\xleftarrow{d_n} A_n\leftarrow 0$ where $a_i=m$ for every $i$, almost surely $\sum_{i=0}^n \beta_i=|\chi|$. We also have that almost surely $\beta_i = 0$ for $i=1, 3, \dots, n-1$.\\
\end{theorem}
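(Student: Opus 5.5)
The plan is to maximize $d(\vec a,\vec r)=\sum_{i=1}^n r_i(2m-r_{i-1}-r_i)$, with $r_0=r_{n+1}=0$, over the feasible lattice region of Proposition~\ref{prop:existence2}: $r_i\ge 0$ and $s_i:=r_i+r_{i+1}\le m$ for $0\le i\le n$. Every rank vector of positive probability is such a maximizer, so I will show that every integer maximizer satisfies $s_i=m$ for all odd $i$. This gives the second assertion directly, since $\beta_i=m-s_i$ for $1\le i\le n-1$. The first assertion then follows from the Euler formula: with $n$ even we have $\chi=m$, and if the odd Betti numbers vanish then $\sum\beta_i=\sum_k\beta_{2k}=\chi=|\chi|$.

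The main tool is a family of alternating exchange moves, generalizing the moves in the proof of Theorem~\ref{thm:length3}. Write the gradient of $d$ as
\[
g_k=2m-r_{k-1}-2r_k-r_{k+1}=(m-s_{k-1})+(m-s_k),
\]
with the conventions $s_0=r_1$ and $s_n=r_n$. Consider the vector $\delta$ with entries $+1,-1,+1,\dots$ placed on a consecutive block of positions $j,\dots,i$. Then:
\begin{itemize}
\item the linear change $\sum_k \delta_k g_k$ telescopes to the slack at the two outer ends of the block;
\item the quadratic part $-\sum r_k^2-\sum r_{k-1}r_k$ contributes exactly $-(L-(L-1))=-1$ for a block of length $L$.
\end{itemize}
Hence $\Delta d=(\text{slack just left of }j)+(\text{slack just right of }i)-1$, where the slacks are $m-s_{j-1}$ and $m-s_i$. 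This is the Monotonicity Lemma extended to chains.

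Now suppose that $r$ is a maximizer and that $s_i<m$ for some odd $i$. I start a chain at position $i$ with $+1$ and propagate leftward, alternating signs. The chain stops at the first index $j$ where either $s_{j-1}<m$, or $j=1$; in the latter case the outer slack is $m-s_0=m-r_1=\beta_0$. Because $i$ is odd, the chain ends at position $1$ with sign $+1$ if it runs all the way, so the parity is consistent. If the chain stops early, both end slacks are at least $1$, so $\Delta d\ge 1$, contradicting maximality. Feasibility of the move must be checked: the entries receiving $-1$ must be at least $1$. If some such entry is $0$, tightness $s=m$ forces its neighbour to equal $m$, and I will use this to cut the chain there instead.

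I expect the main obstacle to be the tie case. This is when the chain reaches position $1$ with $r_1=m$ and $m-s_i=1$, so that $\Delta d=0$ and the move does not contradict maximality. Ties genuinely matter here, since tied maximizers each occur with positive probability, as in Theorem~\ref{thm:length2}. My first attempt is to run a second chain rightward from position $i+1$, which ends at the even position $n$ with sign $+1$. I would then either use it alone, or superpose it with the left chain and recompute $\Delta d$: the cross term between the adjacent $+1$'s at positions $i$ and $i+1$ changes the quadratic count. If $r_1=m$ and $r_n=m$ with all intermediate constraints tight, the tight equalities determine $r$ completely. I would then check directly that $s_i=m$ for all odd $i$ in that configuration, which would contradict the assumed slack and finish the proof.
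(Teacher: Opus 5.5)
Your exchange moves are the paper's own. The paper also takes the maximal tight run ending just left of an odd index with slack, raises the odd-position ranks, lowers the even-position ones, and gets $\Delta d=(m-s_{b-1})+(m-s_{2k+1})-1$. But your proposal does not close: the case you call the main obstacle is left open with only a tentative plan.

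The missing idea is that a tight run is rigid. From $s_k=s_{k+1}=m$ you get $r_k=r_{k+2}$. So on the block $j,\dots,i$, every entry receiving $+1$ equals $r_i$ and every entry receiving $-1$ equals $m-r_i$; this is the paper's step ``$r:=r_b=r_{b+2}=\dots=r_{2k+1}$''. Since $s_i<m$ gives $r_i\le m-1-r_{i+1}$, two things follow:
\begin{itemize}
\item The entries receiving $-1$ are at least $1$, so the move keeps all ranks nonnegative and no chain-cutting is needed.
\item If the chain reaches position $1$, the left slack is $m-r_1=m-r_i\ge 1+r_{i+1}\ge 1$.
\end{itemize}
So your tie case $r_1=m$ never occurs, and the second rightward chain and the superposition are unnecessary. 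You also misdiagnosed that case. With $r_1=m$ the move is not a tie but infeasible, since it makes $r_1=m+1>a_0$, and this holds for every value of $m-s_i$. So your argument gave no contradiction there even when $m-s_i\ge 2$.

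There is also a parity gap when the chain stops early. For an arbitrary odd $i$ with $s_i<m$, the leftward chain can stop at an even $j$, because $s_{j-1}<m$ at the odd index $j-1$. Then $\delta_j=-1$, the telescoped linear term is $-(m-s_{j-1})+(m-s_i)$, and $\Delta d\ge 1$ fails. You must take $i$ to be the smallest odd index with $s_i<m$; the paper's induction on $k$ does exactly this. Then every odd index below $i$ is tight, so the chain stops only at an odd $j$ and both ends carry $+1$. The move is feasible, and $\Delta d\ge 1$ contradicts maximality.

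With these two repairs your argument is complete and coincides with the paper's. Your deduction of $\sum\beta_i=|\chi|$ from $\chi=m$ and the vanishing of the odd Betti numbers is fine.
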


\begin{proof}
Recall that the eligible rank vectors $\vec{r}=(r_1,\dots, r_n)$ have non-negative integer entries and satisfy 
$r_i + r_{i+1} \le m $ for $i = 0, \dots n$.

We show in the following that for any $\vec{r}$ that maximizes $d$ in the feasible region, $\sum_{i=1}^n \beta_i = m$. Since
$$
\sum_{i=0}^n \beta_i = (n+1)m - 2 \sum_{i=1}^n r_i,
$$
this is equivalent to showing that 
\[\sum_{i=1}^n r_i= nm / 2.\]

Suppose that $\vec{r}=(r_1,\dots, r_n)$ is a vector of ranks that maximizes $d$. 
Define

\[ S := \left\{ i \mid r_i + r_{i+1} = m \right\}.\]
By the Monotonicity Lemma \ref{lemma:increasing}, for every $i=1, 2, \dots, n-1$, we have that $S \cap \{ i, i +1 \} \neq \emptyset$. Indeed, if $i \notin S$ and $i+1 \notin S$, then one could increase $r_{i+1}$ and still be in the feasible region, strictly increasing $d$. This would contradict the assumption that $d(\vec{r})$ is maximal.

We will see next that in fact, $2k+1 \in S$ for every $k=0,1,  \dots, (n/2) -1$. We proceed by induction on $k$. First, the case $k=0$ is clear. By the above, we either have $r_1 =m$ or $r_1 + r_2 = m$. But if $r_1=m$, the condition that $r_1 + r_2 \le m$ forces $r_2=0$ and $r_1+r_2=m$ anyway.

So now suppose by a strong induction hypothesis that $\{ 1, 3, \dots 2k-1 \} \subseteq S$, and we will show that $2k+1 \in S$. If $2k+1 \notin S$, then immediately we must have $2k \in S$ and $2k+2 \in S$.

Let $b$ be the smallest number such that the entire interval of integers $\{ b, b+1, \dotsm, 2k \} \subseteq S$. Clearly $b \le 2k-1$. Moreover, since $\{ 1, 3, \dots 2k-1 \} \subseteq S$, $b$ must be odd. We have the equalities $r_b + r_{b+1} = m$,  $r_{b+1} + r_{b+2} = m$, \dots, $r_{2k} + r_{2k+1}  = m$. Let us write simply $r$ for the ranks of the odd maps, i.e. $r:=r_b, r_{b+2}, \dots, r_{2k+1}$.

Now our strategy is to increase the odd-indexed ranks $r_{b},r_{b+2}, \dots r_{2k+1}$ by $1$ and decrease the even-indexed ranks $r_{b+1}, r_{b+3}, \dots, r_{2k}$ by $1$. This preserves all the equalities $r_i + r_{i+1}=m$ for $i=b, b+1, \dots, 2k$. Now let us check what happens at the endpoints of the interval. By minimality of $b$, we have that $r_{b-1} + r_b < m$, so we can increase $r_b$ by $1$ and still be in the allowed region. Finally, by the assumption that $2k+1 \notin S$, we have that $r_{2k+1} + r_{2k+2} < m$, so we can increase $r_{2k+1}$ and still be in the allowed region. 

Finally, we can check how $d$ changes when $r_{b},r_{b+2}, \dots r_{2k+1}$ are increased by $1$ and $r_{b+1}, r_{b+3}, \dots, r_{2k}$ are decreased by $1$. If $\vec{r}'$ is the new vector of ranks, then a tedious but straightforward calculation yields that
\[
\begin{array}{lll}
d(\vec{a},\vec{r}')-d(\vec{a},\vec{r})&=&\ds\sum_{i=1}^{n}r'_i(2m-r'_{i-1}-r'_{i})-\sum_{i=1}^{n}r_i(2m-r_{i-1}-r_{i})\\\\
&=& \ds\sum_{i=b}^{2k+2}r'_i(2m-r'_{i-1}-r'_{i})-r_i(2m-r_{i-1}-r_{i})\\\\
&=& (r_b+1)(2m-r_{b-1}-r_b-1)+r_{2k+2}(2m-r_{2k+1}-1-r_{2k+2})\\\\
&+&\ds\sum_{i=b+1}^{2k+1}(r_i+(-1)^{i+1})(2m-r_{i-1}-r_{i})\\\\
&-&\ds[r_b(2m-r_{b-1}-r_b)+r_{2k+2}(2m-r_{2k+1}-r_{2k+2})\\\\
&+&\ds\sum_{i=b+1}^{2k+1}r_i(2m-r_{i-1}-r_i)]\\\\
&=&2m-2r_b-r_{b-1}-1-r_b+r_{2k+1}-r_{2k+2},
\end{array}
\]
now since we have $r=r_b, r_{b+2},\cdots, r_{2k+1},$ we get 
\[ d(\vec{r}') - d(\vec{r}) = 2m - (2r+1) - r_{b-1} - r_{2k+2}.\]
By minimality of $b$, we have that $r_{b-1} + r \le m-1$. On the other hand, by the assumption that $2k+1 \notin S$ we have that $r+r_{2k+2} \le m-1$. So $d(\vec{r}') - d(\vec{r}) \ge 1$. It can not be that $d$ was maximized at $\vec{r}$.

So we have proved that $2k+1 \in S$ for every $k=0,1,  \dots, (n/2) -1$. That is, $r_i + r_{i+1} = m$ for every odd $i$. Summing all these equalities gives $\sum_{i=1}^m r_i= nm / 2$, as desired. It also follows from $r_i + r_{i+1} = m$ and $\dim a_i = m$ that $\beta_{i}=0$ for $i=1, 3, \dots, n-1$.
\end{proof}


\begin{theorem}\label{prop:Betti for even n}
For a random chain complex
$0\leftarrow A_0\leftarrow A_1\leftarrow\cdots\leftarrow A_n\leftarrow 0,$ where $A_i$ is a vector space over $\mathbb{R}$ of dimension $a_i=m$, for some positive integer $m$, and for all $i$, where $n$ is even. Then almost surely $\beta_{2k}=\left\lfloor\frac{m}{n/2+1}\right\rfloor$ or $\beta_{2k}=\left\lceil\frac{m}{n/2+1}\right\rceil$ for $k=0,1,2,\cdots,n/2.$    
\end{theorem}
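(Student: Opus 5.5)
The plan is to use Theorem~\ref{thm:equaleven} to cut the optimization down to a problem about the even Betti numbers alone, and then to show that maximizing $d$ is the same as minimizing a sum of squares. Let $\vec{r}$ be any rank vector maximizing $d(\vec{a},\vec{r})$. By the proof of Theorem~\ref{thm:equaleven}, $r_{2j-1}+r_{2j}=m$ for $j=1,\dots,n/2$. So I will parametrize by $x_j:=r_{2j-1}$ and set $r_{2j}=m-x_j$, with the convention $x_0:=m$ (which matches $r_0=0$).

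In these coordinates the even Betti numbers are
\[
y_k:=\beta_{2k}=x_k-x_{k+1}\quad (0\le k\le n/2),
\]
with the convention $x_{n/2+1}:=0$. This gives $\beta_0=m-x_1$ and $\beta_n=x_{n/2}$. The $y_k$ are nonnegative integers with $\sum_k y_k=m$, and $x_j=\sum_{k\ge j}y_k$.

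Conversely, any vector of nonnegative integers $(y_0,\dots,y_{n/2})$ with sum $m$ gives a feasible $\vec{r}$. Indeed, $r_{2j-1}+r_{2j}=m$, and $r_{2j}+r_{2j+1}=m-x_j+x_{j+1}=m-y_j\le m$. So the feasible set for the maximizer is exactly this simplex of compositions of $m$.

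Next I will rewrite $d$ in terms of the $y$'s. Use $d(\vec{a},\vec{r})=\sum_i r_i(2m-r_{i-1}-r_i)$.
\begin{itemize}
\item The odd index $i=2j-1$ contributes $x_j(m+x_{j-1}-x_j)$.
\item The even index $i=2j$ contributes $(m-x_j)m$.
\end{itemize}
Summing over $j$, the linear terms in $x_j$ cancel. What remains is
\[
d=\tfrac{n}{2}m^2+\sum_{j=1}^{n/2}x_j(x_{j-1}-x_j)=\tfrac{n}{2}m^2+\sum_{j=1}^{n/2}y_{j-1}\sum_{k\ge j}y_k .
\]
The last sum is $\sum_{i<k}y_iy_k=\tfrac12\bigl(m^2-\sum_k y_k^2\bigr)$. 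Hence maximizing $d$ is equivalent to minimizing $\sum_{k=0}^{n/2}y_k^2$ over nonnegative integer vectors with sum $m$. This algebraic step is the one that needs care; the cancellation and the telescoping are where an error would most likely creep in.

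Finally I will use the standard balancing argument. Suppose some pair satisfies $y_i\ge y_k+2$. Replacing $y_i$ by $y_i-1$ and $y_k$ by $y_k+1$ changes $\sum y^2$ by $-2(y_i-y_k)+2\le -2$, so $d$ strictly increases, and the new vector is still feasible by the converse above. This contradicts maximality. Therefore at a maximizer all $y_k$ differ by at most one, so each $\beta_{2k}$ equals $\lfloor m/(n/2+1)\rfloor$ or $\lceil m/(n/2+1)\rceil$.

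Since every non-maximizing rank vector gives a subvariety of strictly smaller dimension, it has probability zero. The conclusion therefore holds almost surely.
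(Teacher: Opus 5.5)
Your proposal is correct and follows essentially the same route as the paper. After using $r_{2j-1}+r_{2j}=m$ from Theorem~\ref{thm:equaleven}, both arguments show that $d=\tfrac{n+1}{2}m^2-\tfrac12\sum_k\beta_{2k}^2$: the paper gets this via $g=2f-nm^2+m^2$, and you get it via the pairwise-product identity. So maximizing $d$ amounts to minimizing the sum of squares of the even Betti numbers. Your explicit check that every composition of $m$ gives a feasible rank vector, and your exchange argument, fill in steps the paper only asserts when it says the minimum is attained when the Betti numbers are as close together as possible.
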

 
\begin{proof}
Recall that the dimension of the space of chain complexes is given by 
\begin{align*}
d(\vec{a},\vec{r})&=\sum_{i=1}^{n}r_i(2m-r_{i-1}-r_{i})\\
&=2m\sum_{i=1}^{n}r_i-\sum_{i=1}^{n}r_i(r_{i-1}+r_{i}).
\end{align*}
and we have seen in the proof of Theorem \ref{thm:equalodd} that $\sum_{i=1}^{n}r_i=\frac{nm}{2},$ so we have
$$d(\vec{a},\vec{r})=nm^2-\sum_{i=1}^{n}r_i(r_{i-1}+r_{i}).$$
Since we are taking $n$ and $m$ as given constants, the rank vector $\vec{r}=(r_1,r_2,...,r_n)$ maximizes the dimension $d$ if and only if it minimizes $f(\vec{r})=\sum_{i=1}^{n}r_i(r_{i-1}+r_{i})$. Now let $M$ be the set of extremal rank vectors $M=\{\vec{r}~|~\vec{r} \text{ minimizes } f \}$
and consider the function $g=\sum_{i=0}^{n}\beta_i^2.$ From Theorem \ref{thm:equaleven} we have $\beta_{2k-1}=0$ for all $k=1,2,\cdots,\frac{n}{2}.$ Then we can write $g$ as a function of $\vec{r}$ as $$g(\vec{r})=(m-r_1)^2+(m-r_2-r_3)^2+\cdots+(m-r_{n-2}-r_{n-1})^2+(m-r_n)^2.$$ Now let $M^{'}=\{\vec{r}~|~\vec{r} \text{ minimizes } g \}$, our claim is that $M=M^{'}$. Indeed, from the previous proof we know that $r_i+r_{i+1}=m$ for all odd $i$. Then
$$
\begin{array}{lll}
g(\vec{r})&=&(m-(m-r_2))^2-(m-r_2-(m-r_4))^2+\cdots+(m-r_n)^2\\
&=& r_2^2+(r_4-r_2)^2+\cdots+(r_n-r_{n-2})^2+(m-r_n)^2\\
&=&2r_2^2+2r_4^2+\cdots+2r_n^2-2r_2r_4-2r_4r_6-\cdots-2r_{n-2}r_n-2mr_n+m^2\\
&=& 2(f(\vec{r})-m\sum_{i=1}^nr_i)+m^2\\
&=& 2f(\vec{r})-nm^2+m^2.
\end{array}
$$
Therefore, $\vec{r}$ maximizes the dimension function $d$ if and only if it minimizes the function $g$. The lower bound is achieved when the Betti numbers are as close together as possible, given what we have already proved in Theorem \ref{thm:equaleven}, namely that 
\[ \sum_{i=0}^n \beta_i = m.\] So either $\beta_{2k}=\left\lfloor\frac{m}{n/2+1}\right\rfloor$ or $\beta_{2k}=\left\lceil\frac{m}{n/2+1}\right\rceil$ for $k=0,1,2,\cdots,\frac{n}{2}$. It is also straightforward to check that whenever the $\beta_{2k}$ only take these two values and such that $\sum_{i=0}^n \beta_i = m$, the dimension $d$ is the same. So each of these possibilities occurs with positive probability.
\end{proof}

This can give many possibilities for the vector $(\beta_0, \beta_1, \dots, \beta_n)$. Consider the following example. Just for convenience, let $n$ be a multiple of $4$, and then set $m = n^2 / 4 + n/4$. By the above, the rank vectors that occur with positive probability all yield $\beta_0 + \beta_2 + \dots + \beta_n = m$, and for each $i$ we have $\beta_{2i} = n/2 -1$ or $\beta_{2i} = n/2$. One easily checks that the number of $i$ such that $\beta_{2i} = n/2 -1$ is $n/4$.

So there are \[ \binom{n/2+1}{n/4} = \left( \sqrt{2} - o(1) \right)^n\]
possibilities for the vector of Betti numbers. This example shows that if $m$ is allowed to grow with $n$ then the number of positive-probability Betti-number vectors can be exponentially large in $n$.  

\section{Questions}

\begin{itemize}
    \item We conjecture that the analogue of Theorem \ref{thm:length3} holds for random chain complexes of arbitrary length. 

\begin{conjecture}
For a random chain complex of real vector spaces of length $n$, 
\[ 0 \leftarrow A_0 \xleftarrow{d_1} A_1 \xleftarrow{d_2}\cdots\xleftarrow{d_{n}} A_n \leftarrow 0\]
where $a_i+a_{i+2}\geq a_{i+1}$ for every $i$, almost surely $\sum \beta_i=|\chi|$.\\
\end{conjecture}

\item We have seen here examples of two qualitatively different types of random chain complex, one where the Betti-number vector concentrates on a single vector and another where it is spread out over exponentially many vectors. We wonder for which points in parameter space $\vec{a} \in \Z_{\ge 0}^{n+1}$ one gets each type of behavior. \\

\item It would be nice to have an efficient algorithm that, given $\vec{a}$, maximizes $d(\vec{a},\vec{r})$ over all feasible rank vectors $\vec{r}$. This would facilitate experimental work on random chain complexes. As we saw in an earlier example, for some $\vec{a}$ there are many $\vec{r}$ that maximize $d$.  What is the computational complexity? The example above shows that we cannot expect better than exponential time in $n$. But what if the Betti numbers are concentrated on a single vector --- can we find it efficiently?\\

\item Here we are concerned with showing that the Betti-number vector of a random chain complex is typically small in $L_1$ norm. It seems natural to study other norms. The proof of Theorem \ref{thm:equalodd} shows along the way that the Betti-number vector is also almost surely as small as possible in $L_2$ norm.\\

\item One could also study random chain complexes of free abelian groups, with random group homomorphisms given by random integer matrices. There might be torsion. One case of this has already been studied extensively. The zeroth homology $H_0$ in a length $1$ random complex is equivalent to studying the cokernel of a discrete random matrix. In the case of a square matrix, homology with $\Z$ coefficients is typically a finite abelian group, and its distribution is understood in terms of \emph{Cohen--Lenstra heuristics}. See the work on cokernals of random integer matrices by Wood \cite{Wood17,Wood23} and Nguyen--Wood \cite{NW22,NW25}, and also experimental work on torsion in homology of random simplicial complexes by Kahle, Lutz, Newman, and Parsons \cite{KLNP20}. \\
\end{itemize}

The second author dedicates this paper to the memory of Frank Lutz.

\bibliographystyle{alpha}
\bibliography{bibfile}

\end{document}